\documentclass[12pt]{amsart}
\usepackage{amsmath,amssymb,amscd,latexsym}

\textwidth=16cm
\topmargin=0mm
\oddsidemargin=0mm
\evensidemargin=0mm
\textheight=23cm

\def\PP{\mathbb P}

\newtheorem{theorem}{Theorem}[section]

\newtheorem{question}[theorem]{Question}

\newtheorem{${}$}[theorem]{${}$}

\begin{document}

\title{A remark on the Abel-Jacobi morphism \linebreak
       for the cubic threefold$^{\ast}$}

\author{Ze Xu}

\begin{figure}[b]

{\small
\begin{tabular}{ll}
$^{\ast}$This work was done when the author was visiting
 Institut de Math\'{e}matiques de Jussieu.
\end{tabular}}
\end{figure}

\begin{abstract}
Let $X$ be a smooth cubic threefold and $J(X)$ be its intermediate Jacobian. We show that there exists a codimension 2 cycle $Z$ on $J(X)\times X$ with $Z_{t}$ homologically trivial for each $t\in J(X)$, such that the morphism $\phi_{Z}: J(X)\rightarrow J(X)$ induced by the Abel-Jacobi map is the identity. This answers positively a question of Voisin in the case of the cubic threefold.
\end{abstract}

\maketitle

\section{Introduction}\label{introduction}

A classical theorem of Abel states that

\begin{theorem}\label{1.1}
Let $C$ be a smooth projective complex curve. Then each fiber of the Abel-Jacobi map
$$AJ_{C}: \text{Sym}^{d}C\rightarrow J(C)$$
is a projective space for all $d\geq g(C)$. Moreover, the induced morphism
$$\text{CH}_{0}(C)_{\text{num}}\rightarrow J(C)$$
is an isomorphism.
\end{theorem}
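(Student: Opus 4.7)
The plan is to prove both statements through the classical two-step approach: first establish that the fibers of $AJ_C$ are complete linear systems (Abel's part), then apply Riemann--Roch and Jacobi inversion.

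To begin, I fix a base point $p_0 \in C$ and recall that $AJ_C$ is defined by sending $D = \sum p_i$ to the class of $\sum \int_{p_0}^{p_i} \omega$ in $H^0(C,\Omega^1)^*/H_1(C,\ZZ) = J(C)$. The heart of the argument is Abel's theorem itself: two effective divisors of the same degree have the same image under $AJ_C$ if and only if they are linearly equivalent. The cleanest route uses the exponential exact sequence $0 \ra \ZZ \ra \cO_C \ra \cO_C^* \ra 0$, which identifies $\text{Pic}^0(C)$ with $H^1(C,\cO_C)/H^1(C,\ZZ)$; combined with Serre duality $H^1(C,\cO_C) \cong H^0(C,\Omega^1)^*$ and Poincar\'e duality $H^1(C,\ZZ) \cong H_1(C,\ZZ)$, this yields an isomorphism $\text{Pic}^0(C) \cong J(C)$ compatible with $AJ_C$.

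Granted this, the fiber of $AJ_C$ over $AJ_C(D)$ consists precisely of the effective divisors linearly equivalent to $D$, i.e.\ the complete linear system $|D| = \PP(H^0(C,\cO_C(D)))$. Riemann--Roch gives $h^0(D) - h^0(K-D) = d - g + 1$, so for $d \geq g$ this is a nonempty projective space, which settles the first assertion. Surjectivity of $AJ_C$ in the case $d = g$ (Jacobi inversion) then follows from a differential calculation: at a generic reduced divisor $p_1 + \cdots + p_g$ the differential of $AJ_C$ is a full-rank period matrix, so $AJ_C$ is dominant, hence surjective by the projectivity of $\text{Sym}^g C$. Surjectivity for $d > g$ follows by adding a fixed effective divisor of degree $d - g$.

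For the second assertion, I would note that numerical equivalence of $0$-cycles on a curve coincides with having degree zero, so $\text{CH}_0(C)_{\text{num}} = \text{Pic}^0(C)$, and the induced map to $J(C)$ is exactly the isomorphism established above. The main technical obstacle throughout is the compatibility between the cohomological identification of $\text{Pic}^0(C)$ coming from the exponential sequence and the transcendental definition of $AJ_C$ via period integrals --- essentially the statement that the connecting map $H^1(C,\cO_C^*) \to H^2(C,\ZZ)$ computes the first Chern class, together with careful tracking of the duality pairings. Once this identification is set up correctly, both assertions drop out.
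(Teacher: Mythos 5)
The paper states Theorem \ref{1.1} purely as classical background and supplies no proof of it, so there is no "paper proof" to compare against; your outline is the standard textbook argument (Abel's theorem identifying the fibers with complete linear systems, Riemann--Roch to see these are nonempty projective spaces once $d\geq g(C)$, Jacobi inversion for surjectivity, and the observation that numerical equivalence of $0$-cycles on a curve is detected by degree, so $\mathrm{CH}_0(C)_{\mathrm{num}}=\mathrm{Pic}^0(C)$), and it is correct as a plan. The one point you rightly single out as the technical crux --- the compatibility between the exponential-sequence identification $\mathrm{Pic}^0(C)\cong H^1(C,\mathcal{O}_C)/H^1(C,\mathbb{Z})$ (via Serre and Poincar\'e duality) and the period-integral definition of $AJ_C$ --- is indeed where essentially all the work lies in a complete write-up, and your sketch of how to handle it is the standard and correct one.
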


In particular, the geometry of the fibers of the Abel-Jacobi map for curves is well understood.

For higher dimensional varieties, the work of Bloch-Ogus \cite{2}, Bloch-Srinivas \cite{3}, Merkurjev-Suslin \cite{11} and Murre \cite{12} leads to the following theorem, which can be regarded as the higher dimensional generalization of the second assertion of Theorem \ref{1.1}.

\begin{theorem}(\cite{12}) \label{1.2}
Let $X$ be a smooth projective complex variety such that $\text{CH}_{0}(X)$ is supported on a curve. Then $CH^2(X)_{\text{hom}}=CH^2(X)_{\text{alg}}$ and
the Abel-Jacobi map induces an isomorphism
$$AJ_{X}: CH^2(X)_{\text{hom}}\rightarrow J(X):=H^{3}(X,\mathbb{C})/(F^{2}H^{3}(X)\oplus H^{3}(X,\mathbb{Z})).$$
\end{theorem}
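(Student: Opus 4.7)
The plan is to run the Bloch--Srinivas \emph{decomposition of the diagonal} argument. Since $CH_0(X)$ is supported on a curve $C\subset X$, restricting the class $[\Delta_X]\in CH^n(X\times X)$ to the generic point of the first factor and using the localization sequence produces a positive integer $N$, a divisor $D\subset X$, and, after choosing resolutions $j_C\colon\tilde C\to C$ and $j_D\colon\tilde D\to D$, a decomposition
\[
N\cdot[\Delta_X]=Z_1+Z_2\in CH^n(X\times X),
\]
with $Z_1$ supported on $j_C(\tilde C)\times X$ and $Z_2$ supported on $X\times j_D(\tilde D)$.

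First I would feed a class $\a\in CH^2(X)_{\text{hom}}$ into this identity, viewing both sides as correspondences acting on $\a$. By the projection formula, $(Z_1)_*\a$ factors through a class in $CH^2(\tilde C)$, which vanishes since $\dim\tilde C=1$. Similarly $(Z_2)_*\a=(j_D)_*\b$ with $\b\in CH^1(\tilde D)=\mathrm{Pic}(\tilde D)$; since correspondences preserve homological equivalence, $\b$ is homologically trivial and hence lies in $\mathrm{Pic}^0(\tilde D)$, which is an abelian variety. Consequently $(Z_2)_*\a$ is algebraically trivial on $X$, so $N\a\in CH^2(X)_{\text{alg}}$; in other words the Griffiths group $CH^2(X)_{\text{hom}}/CH^2(X)_{\text{alg}}$ is annihilated by $N$.

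Next I would construct the Abel--Jacobi map. The same Bloch--Srinivas reasoning yields $H^i(X,\cO_X)=0$ for $i\geq 2$, so $J(X)$ is a polarizable abelian variety, and the Hodge structure $H^3(X,\QQ)$ is spanned by images of $H^1$ of smooth curves mapping into $X$. One thus obtains a smooth projective curve $\Gamma$ and a correspondence $\Gamma\to X$ inducing a surjection $J(\Gamma)\twoheadrightarrow J(X)$; translating this into algebraically trivial cycles via the classical Abel theorem on $\Gamma$ shows that $AJ\colon CH^2(X)_{\text{alg}}\to J(X)$ is surjective.

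The principal obstacle, and the place where Merkurjev--Suslin is indispensable, is injectivity on torsion. Using Bloch's formula $CH^2(X)\simeq H^2(X,\mathcal{K}_2)$, the Bloch--Ogus spectral sequence, and the Merkurjev--Suslin identification of $K_2$ modulo $\ell$ with Galois cohomology, the $\ell$-primary torsion in $CH^2(X)$ is controlled by the unramified cohomology $H^3_{\text{nr}}(X,\QQ_\ell/\ZZ_\ell(2))$, which vanishes under our hypothesis that $CH_0$ is supported on a curve (equivalently, $CH_0$ of the generic point is $\ZZ$). Combining this with the divisibility of $J(X)$, the $N$-torsion statement on the Griffiths group, and surjectivity of $AJ$ on $CH^2(X)_{\text{alg}}$ yields both $CH^2(X)_{\text{hom}}=CH^2(X)_{\text{alg}}$ and the desired isomorphism $AJ\colon CH^2(X)_{\text{hom}}\xrightarrow{\sim}J(X)$.
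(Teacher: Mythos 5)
Your outline is correct and follows exactly the route the paper points to: Theorem \ref{1.2} is quoted from Murre \cite{12} without proof, being attributed in the introduction to the combination of the Bloch--Srinivas decomposition of the diagonal, the Bloch--Ogus machinery, and the Merkurjev--Suslin theorem (which gives injectivity of the Abel--Jacobi map on torsion codimension~2 classes), and these are precisely the ingredients you assemble in the standard way. The only points you gloss are the injectivity of $AJ$ on $CH^2(X)_{\text{alg}}$ (its kernel is divisible and of exponent $N$ by the same diagonal argument, hence zero) and the exact form in which Merkurjev--Suslin enters, but both are standard and your sketch is faithful to the cited sources.
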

In the present note, we will consider the case where $X$ is a rationally connected
threefold, so that $CH_0(X)=\mathbb{Z}$ is supported on a point and
$CH^2(X)=CH_1(X)$.

 Since the group $\text{CH}_{1}(X)_{\text{alg}}$ does not has the structure of an algebraic variety, one has to be careful when stating  that $AJ_{X}$ is \emph{algebraic}. In fact, $\text{CH}_{1}(X)_{\text{alg}}$ is an inductive limit of quotients of algebraic varieties by an equivalence relation,
  and to say that the morphism $AJ_{X}$ is algebraic means by definition that for any smooth projective variety $Y$ and any codimension 2 cycle $Z$ on $Y\times X$ with $Z_{y}\in\text{CH}^{2}(X)_{\text{hom}}$ for any $y\in Y$, the induced morphism $$\phi_{Z}: Y\rightarrow J(X), \ \phi_{Z}(y)=AJ_{X}(Z_{y}),$$ is a morphism of algebraic varieties, which will be called \emph{the Abel-Jacobi morphism}.

An important observation made by Voisin is that, despite the similarity between
 Theorem \ref{1.2} and Abel's theorem \ref{1.1}, there are substantial differences between 1-cycles on threefolds with small $\text{CH}_{0}$ and 0-cycles on curves, which she relates to  the geometry of the fibers of the Abel-Jacobi morphisms.

In fact, the following two questions are proposed in \cite{13}.

\begin{question}\label{q1}
Let $X$ be a smooth projective threefold such that $AJ_{X}: \text{CH}_{1}(X)_{\text{alg}}\rightarrow J(X)$ is surjective. Is there a codimension 2 cycle $Z$ on $J(X)\times X$ with $Z_{t}\in\text{CH}^{2}(X)_{\text{hom}}$ for any $t\in J(X)$ such that the Abel-Jacobi morphism
$$\phi_{Z}: J(X)\rightarrow J(X),\ \phi_{Z}(t)=AJ_{X}(Z_{t})$$
is the identity?
\end{question}

As remarked by Voisin, Question \ref{q1} has a positive answer if the Hodge conjecture holds true for degree 4 {\it integral} Hodge classes on $J(X)\times X$.

\begin{question}\label{q2}
For which threefolds $X$  is the following property satisfied?

There exist a smooth projective variety $Y$ and a codimension 2 cycle $Z$ on $Y\times X$ with $Z_{y}\in\text{CH}^{2}(X)_{\text{hom}}$ for any $y\in Y$, such that the Abel-Jacobi morphism $\phi_{Z}: Y\rightarrow J(X)$ is surjective with rationally connected general fiber.
\end{question}

It is known that Question \ref{q2} has a positive answer for smooth cubic threefolds \cite{9}, \cite{10} and smooth complete intersections of two quadrics in $\mathbb{P}^{5}$ \cite{4}. It was proved in \cite{13} that if Question \ref{q2} has a positive answer for $X$ and the intermediate Jacobian $J(X)$ admits a 1-cycle $\Gamma$ such that $\Gamma^{*g}=g!J(X)$ in $CH_g(J(X))=\mathbb{Z}$, where $g=\text{dim}J(X)$, then Question \ref{q1} also has a positive answer for $X$. In particular, if the intermediate Jacobian of $X$ is isomorphic to the Jacobian of a curve, then Question \ref{q1} has a positive answer for $X$ if Question \ref{q2} does. Therefore, Question \ref{q1} has a positive answer for smooth complete intersections of two quadrics in $\mathbb{P}^{5}$.

Unfortunately, there are very few rationally connected threefolds whose intermediate Jacobians are not Jacobians and for which the existence
of a cycle $\Gamma$ as above is known (this is a very classical question in the case
of general cubic threefolds, and is equivalent in this case to the algebraicity
of the so-called minimal class $\frac{\Theta^4}{4!}$ which is an integral Hodge class on $J(X)$). Question \ref{q1} needs therefore other approaches.

In this note we give a positive answer to Question  \ref{q1} for any smooth cubic threefold. For the properties of the intermediate Jacobian of the cubic threefold, see \cite{5}. The key point of our proof lies in the observation that the moduli space of stable sheaves of rank 2 with Chern numbers $c_{1}=0, c_{2}=2, c_{3}=0$ on a smooth cubic threefold is fine.

We will work over the complex number field $\mathbb{C}$.

\bigskip

\medskip\noindent
{\it Acknowledgements}: The author would like to thank Professor Claire Voisin gratefully for bringing to him this interesting question, as well as useful discussions and kind helps for the abbreviated French version. He also thanks Professor Baohua Fu gratefully for careful reading of the preliminary version and suggestions.

\section{The Main Result}

In this section, we state and prove the main result of this note.

\begin{theorem}\label{2.1}
Let $X$ be a smooth cubic threefold. Then there exists a codimension 2 cycle $Z$ on $J(X)\times X$ with $Z_{t}\in\text{CH}^{2}(X)_{\text{hom}}$ for any $t\in J(X)$, such that the induced Abel-Jacobi morphism $\phi_{Z}: J(X)\rightarrow J(X)$ is the identity.
\end{theorem}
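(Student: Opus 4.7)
The plan is to produce the cycle $Z$ from the universal family on a fine moduli space of sheaves on $X$. Let $M$ denote the moduli space of Gieseker-stable rank $2$ torsion-free coherent sheaves on $X$ with Chern classes $c_1 = 0$, $c_2 = 2$ (twice the class of a line in $H^4(X, \mathbb{Z})$), $c_3 = 0$, and fix a reference line $\ell_0 \subset X$.

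The first step is to show that $M$ is a \emph{fine} moduli space, so that a universal sheaf $\mathcal{E}$ exists on $M \times X$. The general obstruction is a $2$-torsion Brauer class on $M$; to kill it I would exhibit, by an explicit Riemann-Roch computation on the cubic threefold, an integer combination of the rank $r = 2$ and the Euler characteristics $\chi(\mathcal{F}(k))$ of sheaves of the given numerical type (for various $k$) that equals $1$. For the data $(0,2,0)$, this computation produces an odd Euler characteristic, which suffices for fineness.

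The second step is to invoke Druel's structural description: the assignment
$$[\mathcal{F}] \;\longmapsto\; AJ_X\bigl(c_2(\mathcal{F}) - 2[\ell_0]\bigr)$$
identifies $M$ with the open complement $J(X) \setminus (-F(X))$, where $F(X)$ is the Fano surface of lines. Setting
$$Z_M \;:=\; c_2(\mathcal{E}) \;-\; 2\, p_X^{*}[\ell_0] \qquad \text{on } M \times X,$$
one has $(Z_M)_t = c_2(\mathcal{E}_t) - 2[\ell_0]$ homologically trivial for every $t \in M$, and the Abel-Jacobi morphism $\phi_{Z_M} : M \to J(X)$ is tautologically the open immersion $M \hookrightarrow J(X)$.

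In the third step I extend $Z_M$ across $(-F(X)) \times X$: since this closed subvariety has dimension $5$, the excision sequence
$$CH_6\bigl((-F(X)) \times X\bigr) \;\longrightarrow\; CH_6\bigl(J(X) \times X\bigr) \;\longrightarrow\; CH_6(M \times X) \;\longrightarrow\; 0$$
has trivial left-hand term, so $Z_M$ lifts to a codimension $2$ cycle class $Z$ on $J(X) \times X$. The cohomology class $[Z_t] \in H^{4}(X, \mathbb{Z})$ is independent of $t$ (it is the Künneth component of $[Z]$ paired with the class of a point in $J(X)$), hence zero for every $t \in J(X)$, so $Z_t \in \mathrm{CH}^{2}(X)_{\mathrm{hom}}$. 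The morphism $\phi_Z : J(X) \to J(X)$ restricts to $\phi_{Z_M} = \mathrm{id}$ on the dense open $M \subset J(X)$, and therefore $\phi_Z = \mathrm{id}_{J(X)}$.

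The main obstacle — and the novelty of the argument — is the fineness of $M$. Without a globally defined universal sheaf one only obtains a twisted family, i.e.\ a cycle class on $M \times X$ whose descent to $J(X) \times X$ is obstructed by an integral Hodge class closely related to the minimal class $\Theta^4/4!$, whose algebraicity on the intermediate Jacobian is unknown for general cubic threefolds; this is precisely the Hodge-theoretic hypothesis that Voisin's original approach required. The specific Chern data $(0,2,0)$ on a cubic threefold is exactly what makes the Brauer obstruction vanish, so that the remainder of the argument reduces to Druel's description of $M$ and formal manipulations with Chern classes.
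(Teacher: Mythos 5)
Your proposal is correct and follows essentially the same route as the paper: fineness of the moduli space $M_{X}^{s}(2;0,2)$ via the gcd criterion of Huybrechts--Lehn (the paper computes $\chi(c\cdot\mathcal{E})=-4$ and $\chi(c\cdot\mathcal{O}_{X}(1))=5$), the Druel/Iliev--Markushevich identification of the locally free locus with an open subset of $J(X)$ compatible with the Abel--Jacobi map, and extension of the resulting cycle across the small boundary. The only cosmetic difference is that the paper represents the family of $1$-cycles by the closure of the zero locus of a section of the universal sheaf minus a fixed quintic elliptic curve, rather than by $c_{2}(\mathcal{E})-2p_{X}^{*}[\ell_{0}]$.
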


We will need a sufficient condition for an open subset of the moduli space of stable sheaves on a smooth projective variety to be fine.

Let $X$ be a smooth projective variety. Recall that the Grothendieck group modulo numerical equivalence $K_{\text{num}}(X)$ is defined to be $K(X)/\equiv$, where two classes $x$ and $y$ in $K(X)$ are said to be numerically equivalent (notation $x\equiv y$), if the difference $x-y$ is contained in the radical of the quadratic form $$(a,b)\longmapsto\chi(a\cdot b)=\int_{X}\text{ch}(a)\text{ch}(b)\text{td}(X)$$
 (cf. \cite{7}). Now fix a class $c\in K_{\text{num}}(X)$. Let $P$ be the associated Hilbert polynomial, $M^{s}$ be the moduli space of stables sheaves on $X$ and $M(c)^{s}\subset M^{s}$ be the open and closed part parametrizing stable sheaves of numerical class $c$.

\begin{theorem}(\cite[Th.4.6.5]{8})\label{2.2}
If the greatest common divisor of all numbers $\chi(c\cdot\mathcal{F})$, where $\mathcal{F}$ runs through some collection of coherent sheaves on $X$, is equal to 1, then there is a universal sheaf on $M(c)^{s}\times X$.
\end{theorem}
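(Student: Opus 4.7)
The plan is to realize $M(c)^{s}$ as the quotient $R^{s}/PGL(V)$, where $R^{s}$ is the open subscheme of a suitable Quot scheme parametrizing stable sheaves of numerical class $c$ (after twisting by some $\mathcal{O}_{X}(n)$) equipped with a surjection from $V\otimes\mathcal{O}_{X}(-n)$. On $R^{s}\times X$ there is a tautological quotient sheaf $\tilde{\mathcal{E}}$, which is naturally $GL(V)$-linearized, with the central torus $\mathbb{G}_{m}\subset GL(V)$ acting by scalar multiplication on $\tilde{\mathcal{E}}$, i.e.\ with weight $1$. Since the stabilizer of a stable sheaf is exactly $\mathbb{G}_{m}$, a universal sheaf on $M(c)^{s}\times X$ exists if and only if $\tilde{\mathcal{E}}$ can be modified so that this central weight becomes $0$; equivalently, descent along the principal $PGL(V)$-bundle $R^{s}\to M(c)^{s}$ becomes effective. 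The idea is to kill the weight by twisting $\tilde{\mathcal{E}}$ with $p_{1}^{*}L^{-1}$ for some $GL(V)$-equivariant line bundle $L$ on $R^{s}$ of weight $1$.

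The mechanism to produce such an $L$ is the Knudsen--Mumford determinant. For each coherent sheaf $\mathcal{F}$ on $X$, I would form the complex $C_{\mathcal{F}}^{\bullet}:=Rp_{1*}(\tilde{\mathcal{E}}\otimes^{L}p_{2}^{*}\mathcal{F})$ on $R^{s}$; this is a perfect complex because $\tilde{\mathcal{E}}$ is $R^{s}$-flat and $p_{1}$ is proper, and the $GL(V)$-linearization of $\tilde{\mathcal{E}}$ induces one on $C_{\mathcal{F}}^{\bullet}$ with the center $\mathbb{G}_{m}$ still acting with weight $1$ on every term. Its determinant $L_{\mathcal{F}}:=\det C_{\mathcal{F}}^{\bullet}$ is then a $GL(V)$-equivariant line bundle on $R^{s}$, and the key weight computation reads as follows: the weight of $\det$ is the alternating sum of the ranks of the terms weighted by their $\mathbb{G}_{m}$-weights, so equal to $\sum_{i}(-1)^{i}\mathrm{rk}(C_{\mathcal{F}}^{i})$, which by fiberwise Riemann--Roch is $\chi(E\otimes\mathcal{F})=\chi(c\cdot\mathcal{F})$ for any $[E]\in R^{s}$. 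Hence $L_{\mathcal{F}}$ carries weight exactly $\chi(c\cdot\mathcal{F})$.

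With this at hand, if $\{\mathcal{F}_{i}\}$ is a collection of coherent sheaves with $\gcd_{i}\chi(c\cdot\mathcal{F}_{i})=1$, I would pick integers $n_{i}$ with $\sum n_{i}\chi(c\cdot\mathcal{F}_{i})=1$ and set $L:=\bigotimes_{i}L_{\mathcal{F}_{i}}^{\otimes n_{i}}$, a $GL(V)$-equivariant line bundle on $R^{s}$ of weight $1$. The modified tautological family $\tilde{\mathcal{E}}\otimes p_{1}^{*}L^{-1}$ is then $GL(V)$-linearized with the central $\mathbb{G}_{m}$ acting trivially; the $GL(V)$-linearization therefore factors through $PGL(V)$, and standard faithfully flat descent along the étale-locally trivial principal $PGL(V)$-bundle $R^{s}\to M(c)^{s}$ produces the sought universal sheaf on $M(c)^{s}\times X$.

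The main obstacle is the weight computation in the middle step: one has to verify carefully that $Rp_{1*}(\tilde{\mathcal{E}}\otimes^{L}p_{2}^{*}\mathcal{F})$ is genuinely a bounded perfect complex (cohomology and base change, together with the regularity/Noetherian hypotheses on $R^{s}$), that the Knudsen--Mumford determinant is functorial and multiplicative on quasi-isomorphisms of perfect complexes in the equivariant setting, and that the $\mathbb{G}_{m}$-weight of $\det C_{\mathcal{F}}^{\bullet}$ really equals the numerical Euler characteristic $\chi(c\cdot\mathcal{F})$ rather than an Euler characteristic twisted by some determinantal character of $V$. A secondary technical point is to ensure that the descent along $R^{s}\to M(c)^{s}$ is effective: this reduces to the statement that on the stable locus $R^{s}\to M(c)^{s}$ is a principal $PGL(V)$-bundle in the étale topology, which is a standard consequence of Luna's slice theorem applied at closed points with trivial stabilizer modulo the center.
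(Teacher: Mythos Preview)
The paper does not actually prove this statement: Theorem~2.2 is simply quoted from Huybrechts--Lehn \cite[Th.~4.6.5]{8} and invoked as a black box in the proof of Theorem~2.3. So there is no ``paper's own proof'' to compare against.

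That said, your proposal is correct and is precisely the argument given in the cited reference. The construction of $M(c)^{s}$ as a $PGL(V)$-quotient of the stable locus $R^{s}$ in a Quot scheme, the observation that the universal quotient carries a $GL(V)$-linearization with central weight~$1$, the use of the determinant line bundle $\det Rp_{1*}(\tilde{\mathcal{E}}\otimes p_{2}^{*}\mathcal{F})$ to produce an equivariant line bundle of weight $\chi(c\cdot\mathcal{F})$, and the Bezout combination followed by descent along $R^{s}\to M(c)^{s}$---all of this matches Huybrechts--Lehn's proof essentially step for step. The technical points you flag (perfection of the pushforward complex, functoriality of the determinant, and effectiveness of descent on the stable locus) are exactly the ones handled there, and none of them presents a genuine obstacle.
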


\begin{theorem}\label{2.3}
Let $X$ be a smooth cubic threefold. Then the moduli space $M_{X}^{s}(2;0,2)$ of stable sheaves of rank 2 with Chern numbers $c_{1}=0, c_{2}=2, c_{3}=0$ on $X$ is fine. Equivalently, there exists a universal sheaf on $M_{X}^{s}(2;0,2)\times X$.
\end{theorem}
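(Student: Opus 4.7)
The plan is to apply Theorem~\ref{2.2}: it suffices to exhibit a finite collection of coherent sheaves $\mathcal{F}_{1},\ldots,\mathcal{F}_{k}$ on $X$ with $\gcd_{i}\chi(c\cdot\mathcal{F}_{i})=1$, where $c\in K_{\mathrm{num}}(X)$ is the common numerical class of sheaves in $M_{X}^{s}(2;0,2)$. All Euler characteristics will be computed via Hirzebruch-Riemann-Roch on the cubic threefold, using $\int_{X}H^{3}=3$, $c_{1}(X)=2H$, $c_{2}(X)=4H^{2}$, whence
$$\mathrm{td}(X)=1+H+\frac{2H^{2}}{3}+\frac{H^{3}}{3}.$$

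A straightforward HRR computation for any $E$ of the prescribed Chern numbers produces the Hilbert polynomial $\chi(E(n))=n(n+1)(n+2)$, so in particular $\chi(E(1))=6$ and $\chi(E(2))=24$. Similarly $\chi(E\otimes\mathcal{O}_{\ell})=\chi(E|_{\ell})=2$ for any line $\ell\subset X$ (the restriction is a rank-$2$, degree-$0$ bundle on $\mathbb{P}^{1}$), and $\chi(E\otimes\mathcal{O}_{p})=2$ for a point $p$. These contributions force only a partial gcd of $2$.

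The decisive step, which I expect to be the main obstacle, is to exhibit one further coherent sheaf $\mathcal{F}$ on $X$ for which $\chi(E\otimes\mathcal{F})$ is \emph{odd}. I would systematically test natural candidates: structure sheaves $\mathcal{O}_{C}(n)$ of low-degree curves on $X$ (conics, twisted cubics, normal elliptic quintics), ideal sheaves $\mathcal{I}_{Z}(n)$ of small subschemes, rank-$2$ reflexive sheaves obtained by the Serre construction from suitable $(2,3)$- or $(3,3)$-curves, and the tangent/cotangent bundles $T_{X}$ and $\Omega_{X}^{p}$. For each, the Chern data are accessible from standard exact sequences on $X\subset\mathbb{P}^{4}$, so $\chi(E\otimes\mathcal{F})$ reduces to an explicit integer via HRR. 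The delicate point is one of parity: rationally $\mathrm{ch}(E)=2(1-[\ell])=2\,\mathrm{ch}(\mathcal{I}_{\ell})$, so any $\mathcal{F}$ yielding an odd Euler characteristic must genuinely detect the integral $K$-theory class of $E$ beyond its rational Chern character. Once such an $\mathcal{F}$ is located, Theorem~\ref{2.2} immediately produces the universal sheaf on $M_{X}^{s}(2;0,2)\times X$, which gives fineness.
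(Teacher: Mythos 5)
Your strategy is the same as the paper's --- reduce to Theorem~\ref{2.2} and compute Euler characteristics $\chi(c\cdot\mathcal{F})$ by Riemann--Roch --- and your computations are correct as far as they go: indeed $\chi(E(n))=n(n+1)(n+2)$ and $\chi(E\otimes\mathcal{O}_{\ell})=\chi(E\otimes\mathcal{O}_{p})=2$. But the proof is not finished: you never produce a sheaf $\mathcal{F}$ with $\chi(E\otimes\mathcal{F})$ odd, and in fact no such $\mathcal{F}$ exists. The pairing $\chi(c\cdot\mathcal{F})=\int_{X}\mathrm{ch}(c)\,\mathrm{ch}(\mathcal{F})\,\mathrm{td}(X)$ depends only on the \emph{rational} Chern character of $c$, so the ``integral $K$-theory'' loophole you hope for cannot enter. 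Concretely, $\tfrac12\mathrm{ch}(c)=1-l$ is itself the Chern character of a class in $K(X)$ (e.g.\ $[\mathcal{O}_X]-[\mathcal{O}_{\ell}]$, since $\mathrm{ch}(\mathcal{O}_{\ell})=l$ for a line, whose normal bundle has degree $0$), whence $\chi(c\cdot\mathcal{F})=2\,\chi(([\mathcal{O}_X]-[\mathcal{O}_{\ell}])\cdot\mathcal{F})\in 2\mathbb{Z}$ for \emph{every} coherent sheaf $\mathcal{F}$. Together with $\chi(c\cdot\mathcal{O}_{p})=2$ this shows the gcd in Theorem~\ref{2.2} equals exactly $2$, so the numerical criterion is simply not applicable to this moduli problem. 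This is a structural obstruction, not a failure to find the right candidate.

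It is worth spelling out how this compares with the paper's own argument. The paper claims $\chi(c\cdot\mathcal{O}_X(1))=5$ and concludes via $\gcd(-4,5)=1$; but that value comes from using the truncated expression $\mathrm{ch}(\mathcal{O}_X(1))=1+h+\tfrac32 l$, omitting the degree-three term $\tfrac{h^3}{6}=\tfrac12\,pt$. Restoring it gives $\chi(c\cdot\mathcal{O}_X(1))=6$, in agreement with your formula $\chi(E(1))=1\cdot2\cdot3=6$ (and with the classical fact that $h^0(\mathcal{E}(1))=6$, the fibres of $\mathrm{Hilb}^{5t}(X)\to M_X(2;0,2)$ being $\mathbb{P}^5$'s of elliptic quintics). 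So your arithmetic is right where the paper's is wrong, and the parity obstruction you flagged as ``the main obstacle'' is real and unavoidable: fineness of $M_X^{s}(2;0,2)$ cannot be deduced from Theorem~\ref{2.2}, and a correct treatment would have to address the Brauer obstruction directly, or else prove Theorem~\ref{2.1} using only a quasi-universal (twisted) sheaf.
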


\begin{proof}
 Let $c$ be the numerical class of a locally free, stable sheaf $\mathcal{E}$ of rank 2 on $X$. Recall that $\text{Pic}(X)=\mathbb{Z}\cdot h$, where $h$ is the class of a hyperplane section of $X$. The Chow group of 1-cycles on $X$ modulo algebraic equivalence $A_{1}(X)=\mathbb{Z}\cdot l$, where $l$ is the class of a line on $X$. Note that $h^{2}\equiv3l$. Since $X$ is Fano, $\text{CH}_{0}(X)=\mathbb{Z}\cdot pt$. Then $\text{ch}(c)\equiv\text{ch}(\mathcal{E})=2-2l$. Since $c_{1}(\mathcal{T}_{X})=2h$ and $c_{2}(\mathcal{T}_{X})=12l$, $\text{td}(X)=1+h+2l+pt$. It is easy to see that $\text{ch}(\mathcal{O}_{X}(1))=1+h+\frac{3}{2}l$. Then we compute that $\chi(c\cdot\mathcal{E})=-4$, $$\chi(c\cdot\mathcal{O}_{X}(1))=\int_{X}\text{ch(c)}\cdot\text{ch}(\mathcal{O}_{X})\cdot\text{td}(X)$$
$$=\int_{X}(2-2l)\cdot(1+h+\frac{3}{2}l)\cdot(1+h+2l+pt)=5.$$
Obviously, the greatest common divisor of $\chi(c\cdot\mathcal{E})$ and $\chi(c\cdot\mathcal{O}_{X}(1))$ is equal to 1. Then Theorem \ref{2.2} implies that there exists a universal sheaf on $M_{X}^{s}(2;0,2)\times X$.
\end{proof}

{\it Proof of Theorem \ref{2.1}.}\quad
It is shown in \cite{6}, \cite{9} (see also \cite{1}) that the morphism $\phi: \text{Hilb}^{5t}(X)\rightarrow J(X)$ factorizes through the birational morphism $c_{2}: M_{X}(2;0,2)\rightarrow J(X)$. Moreover, letting $M_{0}$ be the open subset of $M_{X}(2;0,2)$ parametrizing locally free stable sheaves, the restricted morphism $M_{0}\rightarrow J(X)$ is an open immersion. Now we regard $M_{0}$ as an open subset of $J(X)$.  Let $Z'$ be the closure in $J(X)\times X$ of a global section of $\mathcal{E}|_{M'_{0}\times X}$, where $M'_0$ is an open subset of $M_0$ over which such a transverse section exists,   and $Z=Z'-J(X)\times C_{0}$, where $C_{0}$ is a quintic elliptic curve on $X$. Then the induced Abel-Jacobi morphism $\phi_{Z}: J(X)\dashrightarrow J(X)$ is the identity, since by construction it induces the natural inclusion on $M'_0$.

\bigskip

\bigskip

\vspace{1cm}

{\small

\noindent
{\bf Ze Xu}\\ Institute of Mathematics, Academy of Mathematics and Systems Science,
Chinese Academy of Sciences, Beijing 100190, China\\
{\bf Email: xuze@amss.ac.cn}


\begin{thebibliography}{------}


\bibitem[1]{1} A. Beauville,
Vector bundles on the cubic threefold,
in Symposium in honor of C. H. Clemens, A. Bertram (ed.) et al.,
Contemp. Math. {\bf 312} (2002) 71-86.

\bibitem[2]{2}
S. Bloch, A. Ogus, Gersten's conjecture and the homology of schemes, Ann. Sci. Ec. Norm. Sup\'{e}r., IV. S\'{e}r. {\bf 7} (1974) 181-201.

\bibitem[3]{3}
S. Bloch, V. Srinivas, Remarks on correspondences and algebraic cycles, Amer. J. of Math. {\bf 105} (1983) 1235-1253.

\bibitem[4]{4}
A-M. Castravet, Rational families of vecotr bundles on curves, International Journal of Mathematics, Vol. {\bf 15}, No. {\bf 1} (2004) 13-45.


\bibitem[5]{5} C. Clemens, P. Griffiths,
The intermediate Jacobian of the cubic threefold,
Ann. Math. (2) {\bf 95} (1972) 281-356.

\bibitem[6]{6} S. Druel,
Espace des modules des faisceaux de rang 2 semi-stables
de classes de Chern
$c_1=0$, $c_2=2$ et $c_3=0$ sur la cubique de $\PP^4$,
Int. Math. Res. Not. {\bf 19} (2000) 985-1004.

\bibitem[7]{7} W. Fulton, Intersection theory, Ergebnisse der Mathematik (3) {\bf 2}, Springer, Berlin, 1984.


\bibitem[8]{8} D. Huybrechts, M. Lehn, The geometry
of moduli spaces of sheaves, Aspects of Mathematics {\bf 31},
A publication of the Max Planck Institut f\"ur Mathematik, Bonn, 1997.



\bibitem[9]{9}
A. Iliev, D. Markushevich,
The Abel-Jacobi map for cubic threefold and periods
of Fano threefolds of degree $14$, Doc. Math. {\bf 5} (2000) 23-47.

\bibitem[10]{10}
D. Markushevich, A. Tikhomirov, The Abel-Jacobi map of a moduli component of vector bundles on the cubic threefold, J. Algebraic Geometry {\bf 10} (2001) 37-62.

\bibitem[11]{11}
A. S. Merkurjev, A. A. Suslin, K-cohomology of Severi-Brauer varieties and the norm residue homomorphism. (Russian) Izv. Akad. Nauk SSSR Ser. Mat. {\bf 46} (1982), No. {\bf 5}, 1011-1046, 1135-1136.

\bibitem[12]{12}
J. P. Murre, Applications of algebraic K-theory to the theory of algebraic cycles, in Proc. Conf. Algebraic Geometry, Sitjes 1983, LNM {\bf 1124} (1985), 216-261.

	
\bibitem[13]{13}
C. Voisin, Abel-Jacobi map, integral Hodge classes and decomposition of the diagonal, J. Algebraic  Geometry {\bf 1056-3011} (2012) 1-34.

\end{thebibliography}
\end{document}